\newtheorem{thm}{Theorem}[section]
\theoremstyle{definition}
\newtheorem{defn}[thm]{Definition}
\theoremstyle{remark}
\newtheorem{rem}[thm]{Remark}
\numberwithin{equation}{section}
\newcommand{\pom}{\partial\Omega}
\newcommand{\bfx}{\mbox{\boldmath{$x$}}}
\newcommand{\bfn}{\mbox{\boldmath{$n$}}}
\newcommand{\bfu}{\mbox{\boldmath{$u$}}}
\begin{document}

\title[Degenerate coupled flows in porous media]
{Global weak solutions to degenerate
coupled diffusion-convection-dispersion processes and
heat transport in porous media}%
\author{Michal Bene\v{s}}%
\address{Michal Bene\v{s}
\\
Department of Mathematics
\\
Faculty of Civil Engineering
\\
Czech Technical University in Prague
\\
Th\'{a}kurova 7, 166 29 Prague 6, Czech Republic}%
\email{michal.benes@cvut.cz}%
\author{Luk\'{a}\v{s} Krupi\v{c}ka}%
\address{Luk\'{a}\v{s} Krupi\v{c}ka
\\
Department of Mathematics
\\
Faculty of Civil Engineering
\\
Czech Technical University in Prague
\\
Th\'{a}kurova 7, 166 29 Prague 6, Czech Republic}%
\email{luk.krupicka@gmail.com}%

\thanks{}%
\subjclass{35A05, 35D05, 35B65, 35B45, 35B50, 35K15, 35K40}%
\keywords{Initial-boundary value problems for second-order parabolic systems;
global existence, smoothness and regularity of solutions;
coupled transport processes in porous media.}%

\begin{abstract}

In this contribution we prove the existence of weak solutions
to degenerate parabolic systems
arising from the coupled moisture movement,
transport of dissolved species and heat transfer
through partially saturated porous materials.
Physically motivated mixed Dirichlet-Neumann boundary conditions
and initial conditions are considered.
Existence of a global weak solution
of the problem is proved by means of semidiscretization in time
and by passing to the limit from discrete approximations.
Degeneration
occurs in the nonlinear transport coefficients
which are not assumed to be bounded below and above by positive
constants.
Degeneracies in all transport coefficients
are overcome by proving suitable a-priori $L^{\infty}$-estimates for the approximations
of primary unknowns of the system.

\end{abstract}

\maketitle

\section{Introduction}

Let $\Omega$ be a bounded domain in $\mathbb{R}^2$, $\Omega \in
C^{0,1}$  and let $\Gamma_D$ and $\Gamma_N$ be open disjoint subsets of
$\pom$ (not necessarily connected) such that $\Gamma_D\neq\emptyset$
and the $\partial\Omega \backslash (\Gamma_D\cup\Gamma_N)$ is a
finite set. Let $T\in(0,\infty)$ be fixed throughout the paper, $I=(0,T)$
and $Q_T = \Omega\times I$ denotes the space-time cylinder,
$\Gamma_{DT} = \Gamma_D\times I$
and
$\Gamma_{NT}=\Gamma_N\times I$.

We shall study the following initial boundary value problem in $Q_{T}$
\begin{align}
\partial_t b(u)
&
=
\nabla\cdot[a(\theta)\nabla u],
\label{strong:eq1a}
\\
\partial_t [b(u)w]
&
=
\nabla\cdot[b(u)D_w(u)\nabla w]
+
\nabla\cdot[ w a(\theta)\nabla u],
\label{strong:eq1b}
\\
\partial_t \left[ b(u) \theta + {\varrho} \theta  \right]
&=
\nabla \cdot [ \lambda(\theta,u) \nabla \theta ]
+
\nabla\cdot
[\theta a(\theta)\nabla u ],
\label{strong:eq1c}
\end{align}
with the mixed-type boundary conditions
\begin{align}
u &= 0,
\;
w = 0,
\;
\theta = 0
&&
{\rm on} \; \Gamma_{DT},
\label{strong:eq1f}
\\
{{\nabla u}}  \cdot \bfn
& = 0,
\;
\nabla w \cdot \bfn
=
0,
\;
\nabla \theta \cdot \bfn
=
0
&&
{\rm on} \;  \Gamma_{NT}
\label{strong:eq1i}
\end{align}
and the initial conditions
\begin{equation}
u(\cdot,0)  =  u_0,
\;
w(\cdot,0)  = w_0,
\;
\theta(\cdot,0)  = \theta_0
\qquad
{\rm in} \;  \Omega.
\label{strong:eq1l}
\end{equation}

The system \eqref{strong:eq1a}--\eqref{strong:eq1l} arises from the coupled moisture movement,
transport of dissolved species and heat transfer
through the porous system \cite{Bear,PinderGray}.
For simplicity,
the gravity terms and external sources are not included since they do not affect the analysis.
For specific applications we refer the reader to e.g. \cite{Ozbolt}.
Here $u : Q_{T} \rightarrow \mathbb{R}$, $w : Q_{T} \rightarrow \mathbb{R}$
and $\theta : Q_{T} \rightarrow \mathbb{R}$ are the unknown functions.
In particular, $u$ corresponds to the Kirchhoff transformation
of the matric potential \cite{AltLuckhaus1983}, $w$ represents concentration of dissolved species
and $\theta$ represents the temperature of the porous system.
Further, $a:\mathbb{R}\rightarrow \mathbb{R}$,
$D_w:\mathbb{R}\rightarrow \mathbb{R}$,
$b:\mathbb{R}\rightarrow \mathbb{R}$,
$\lambda:\mathbb{R}^2\rightarrow \mathbb{R}$,
$u_{0} : \Omega \rightarrow \mathbb{R}$,
$w_{0} : \Omega \rightarrow \mathbb{R}$,
and
$\theta_{0} : \Omega\rightarrow \mathbb{R}$
are given functions,
$\varrho$ is a real positive constant
and  $\bfn$ is the outward unit normal vector.
In this paper we study the existence
of the weak solution to the system  \eqref{strong:eq1a}--\eqref{strong:eq1l}.

\bigskip

Nowadays, description of heat, moisture or soluble/non-soluble
contaminant transport in concrete, soil or rock porous matrix is
frequently based on time dependent models.
Coupled transport processes (diffusion processes, heat conduction, moister flow,
contaminant transport or coupled flows through porous media) are typically associated with
systems of strongly nonlinear degenerate parabolic partial differential
equations of type (written in terms of operators
${A}$, ${\Psi}$, ${F}$)
\begin{equation}\label{eq:par1}
 \partial_t {\Psi}(\bfu) -\nabla\cdot
{A}(\bfu,\nabla \bfu)
=
{F}(\bfu),
\end{equation}
where $\bfu$ stands for the unknown vector of state variables.
There is no complete theory for
such general problems. However,
some particular results assuming special structure of
operators ${A}$ and ${\Psi}$ and growth
conditions on ${F}$ can be found in
the literature.
Most theoretical results on parabolic systems exclude the case of non-symmetrical parabolic parts \cite{AltLuckhaus1983,FiloKacur1995,Kacur1990a}.
Giaquinta and Modica in~\cite{GiaquintaModica1987} proved the local-in-time solvability of quasilinear
diagonal parabolic systems with nonlinear boundary conditions (without assuming any growth condition), see also \cite{Weidemaier1991}.
The existence of weak solutions to more general non-diagonal systems like \eqref{eq:par1} subject to mixed
boundary conditions has been proven in~\cite{AltLuckhaus1983}. The authors
proved an existence result assuming the operator ${\Psi}$
to be only (weak) monotone and subgradient. This
result has been extended in~\cite{FiloKacur1995}, where the authors presented
the local existence of the weak
solutions for the system with nonlinear Neumann boundary conditions
and under more general growth conditions on nonlinearities in $\bfu$.
These results, however, are not applicable if ${\Psi}$ does not take the
subgradient structure, which is typical of coupled transport models in porous media.
Thus, the analysis needs to exploit the specific structure of such problems.

The existence
of a local-in-time strong solution
for moisture and heat transfer in multi-layer porous structures
modelling by the doubly nonlinear parabolic system
is proven in \cite{BenesZeman}.
In \cite{Vala2002}, the author
proved the existence of the solution to the purely diffusive
hygro-thermal model allowing non-symmetrical operators $\Psi$, but requiring
non-realistic symmetry in the elliptic part.
In
\cite{degond,jungel2000}, the authors studied the existence, uniqueness and regularity
of coupled quasilinear equations modeling evolution of fluid species influenced by thermal, electrical
and diffusive forces.
In \cite{LiSun2010,LiSunWang2010,LiSun2012}, the authors studied a model of specific structure
of a heat and mass transfer arising from textile industry and
proved the global existence for one-dimensional problems in \cite{LiSun2010,LiSunWang2010}
and three-dimensional problems in \cite{LiSun2012}.

In the present paper we extend our previous existence result for coupled heat and
mass flows in porous media \cite{BenesKrupicka2016}
to more general problem (including the convection-dispersion equation)
modeling coupled moisture, solute and heat transport in porous media.
This leads to a fully nonlinear degenerate parabolic system
with natural (critical) growths and degeneracies in all transport coefficients.

The rest of this paper is organized as follows.  In Section \ref{sec:prelim},
we introduce basic notation and suitable function spaces
and specify our assumptions on data
and coefficient functions in the problem.
In Section~\ref{sec:main_result}, we formulate the problem in the
variational sense and state the main result, the global-in-time
existence of the weak solution.
The main result is proved by an approximation procedure in Section \ref{sec:proof_main}.
First we formulate the semi-discrete scheme and prove
the existence of its solution.
The crucial a-priori estimates and uniform boundness
of time interpolants are proved in part~\ref{sec:estimates}.
Finally, we conclude that
the solutions of semi-discrete scheme converge and the limit
is the solution of the original problem (Subsection~\ref{subsec:limit}).

\begin{rem}
The present analysis can be straightforwardly extended to a setting with
nonhomogeneous boundary conditions (see \cite{BenesKrupicka2016} for details).
Here we work with homogeneous boundary conditions, ignoring the gravity terms
and excluding external sources to simplify
the presentation and avoid unnecessary technicalities in the existence result.
\end{rem}

\section{Preliminaries}
\label{sec:prelim}
\subsection{Notations and some properties of Sobolev spaces}
\label{notations}
Vectors and vector functions are denoted by boldface letters.
Throughout the paper, we will always use positive constants $C$,
$c$, $c_1$, $c_2$, $\dots$, which are not specified and which may
differ from line to line.
Throughout this paper we suppose
$s,q,s'\in [1,\infty]$, $s'$ denotes the conjugate exponent to $s>1$,
${1}/{s} + {1}/{s'} = 1$.
$L^s(\Omega)$ denotes the usual
Lebesgue space equipped with the norm $\|\cdot\|_{L^s(\Omega)}$ and
$W^{k,s}(\Omega)$, $k\geq 0$ ($k$ need not to be an integer, see
\cite{KufFucJoh1977}), denotes the usual
Sobolev-Slobodecki space with the norm $\|\cdot\|_{W^{k,s}(\Omega)}$.
We define
$
W^{1,2}_{\Gamma_D}(\Omega)
:=
\left\{
v \in W^{1,2}(\Omega); \,   v \big|_{\Gamma_D} = 0
\right\}$.
By  $E^*$ we denote the space of all continuous, linear forms on Banach space $E$
and by $\langle \cdot,\cdot \rangle$ we denote the
duality between $E$ and $E^*$.
By $L^s(I;E)$ we denote the Bochner space (see \cite{AdamsFournier1992}).
Therefore, $L^s(I;E)^*=L^{s'}(I;E^*)$.

\subsection{Structure and data properties}
\label{Structure and data properties}

We start by introducing our assumptions on  functions in
\eqref{strong:eq1a}--\eqref{strong:eq1l}.
\begin{itemize}

\item[(i)]
$b$ is a positive continuous strictly monotone function such that
\begin{align*}
& 0 < b(\xi) \leq b_{2} < +\infty
&&
\forall \xi \in \mathbb{R} \quad (b_{2} = {\rm const}),
\\
&  \left(b(\xi_1)-b(\xi_2)\right)(\xi_1 - \xi_2) > 0
&&
\forall \xi_1,\xi_2 \in \mathbb{R}, \;  \xi_1 \neq \xi_2.
\end{align*}

\item[(ii)]
$a$, $D_w$ $\in C(\mathbb{R})$ and $\lambda$ $\in C(\mathbb{R}^2)$ such that
\begin{align*}
&
0 < a(\xi) , \; 0 <  D_w(\xi)
&&
\forall \xi \in \mathbb{R},
\\
&
0 < \lambda(\xi,\zeta)
&&
\forall \xi,\zeta \in \mathbb{R}.
\end{align*}

\item[(iii)] (Initial data)
Assume
\begin{displaymath}
u_0, \;  w_0, \; \theta_0 \in L^{\infty}(\Omega),
\end{displaymath}
such that
\begin{equation}
-\infty < u_{1} < u_0 < u_{2} < 0   \qquad \textmd{ a.e. in } \Omega \quad
(u_{1},u_{2}={\rm const}).
\end{equation}

\end{itemize}

\subsection{Auxiliary results}
\begin{rem}[\cite{AltLuckhaus1983}, Section 1.1]
Let us note that {\rm (i)} implies that
there is a (strictly) convex $C^1$-function
$\Phi:\mathbb{R}\rightarrow \mathbb{R}$, $\Phi(0)=0$,
$\Phi'(0)=0$, such that
$b(z) - b(0) = \Phi'(z) \; \forall  z \in \mathbb{R}$.
Introduce the Legendre transform
$$
B(z)
:=
\int_{0}^{1}(b(z)-b(sz)) z \, {\rm d}s
=
\int_{0}^{z}(b(z)-b(s))\, {\rm d}s .
$$
\end{rem}
Let us present some properties of $B$ \cite{AltLuckhaus1983}:
\begin{align*}
&
B(z)
:=
\int_{0}^{1}(b(z)-b(sz)) z \, {\rm d}s \geq 0
&&
\forall z \in \mathbb{R},
\\
&
B(s) - B(r) \geq (b(s)-b(r))r
&&
\forall r,s \in \mathbb{R},
\\
&
b(z)z - \Phi(z) + \Phi(0) = B(z) \leq b(z)z
&&
\forall z \in \mathbb{R}.
\end{align*}

\section{The main result}\label{sec:main_result}
The aim of this paper is to prove the existence of a weak solution to the problem
\eqref{strong:eq1a}--\eqref{strong:eq1l}.
First we formulate our problem in a variational sense.
\begin{defn}
\label{def_weak_solution}
A weak solution of \eqref{strong:eq1a}--\eqref{strong:eq1l}
is a triplet $[u,w,\theta]$ such that
\begin{equation*}
u \in L^2(I;W_{\Gamma_D}^{1,2}(\Omega)) ,
\,
w \in  L^2(I;W_{\Gamma_D}^{1,2}(\Omega)) \cap L^{\infty}({Q_T}),
\,
\theta \in  L^2(I;W_{\Gamma_D}^{1,2}(\Omega)) \cap L^{\infty}({Q_T}),
\end{equation*}
which satisfies
\begin{equation}
\label{weak_form_01}
-\int_{Q_T}  b(u) \partial_t\phi
{\,{\rm d}x} {\rm d}t
+
\int_{Q_T}
{a(\theta){\nabla u}} \cdot\nabla\phi
{\,{\rm d}x} {\rm d}t
=
\int_{\Omega}   b(u_{0}) \phi(\bfx,0) {\,{\rm d}x}
\end{equation}
for any  $\phi \in L^2(I;W^{1,2}_{\Gamma_D}(\Omega))\cap W^{1,1}(I;L^{\infty}(\Omega))$
with $\phi(\cdot,T)=0$;

\begin{multline}
\label{weak_form_02}
-\int_{Q_T}  b(u)w \partial_t\eta
{\,{\rm d}x} {\rm d}t
+
\int_{Q_T}
b(u)D_w(u)\nabla w \cdot\nabla \eta
{\,{\rm d}x} {\rm d}t
\\
+
\int_{Q_T}
w {a(\theta){\nabla u}} \cdot\nabla\eta
{\,{\rm d}x} {\rm d}t
=
\int_{\Omega} b(u_0)w_0 \eta(\bfx,0) {\,{\rm d}x}
\end{multline}
for any  $\eta \in L^2(I;W^{1,2}_{\Gamma_D}(\Omega))\cap W^{1,1}(I;L^{\infty}(\Omega))$
with $\eta(\cdot,T)=0$;

\begin{multline}\label{weak_form_03}
-\int_{Q_T}
[  b(u)\theta + {\varrho} \theta ] \partial_t\psi
{\,{\rm d}x} {\rm d}t
+
\int_{Q_T}
\lambda(\theta,u) \nabla\theta \cdot \nabla\psi
{\,{\rm d}x} {\rm d}t
\\
+
\int_{Q_T}
\theta {a(\theta){\nabla u}}
\cdot \nabla\psi
{\,{\rm d}x} {\rm d}t
=
\int_{\Omega}
[  b(u_{0}) \theta_{0} + {\varrho}  \theta_{0} ] \psi(\bfx,0) {\,{\rm d}x}
\end{multline}
for any  $\psi \in L^2(I;W^{1,2}_{\Gamma_D}(\Omega))\cap W^{1,1}(I;L^{\infty}(\Omega))$
with $\psi(\cdot,T)=0$.
\end{defn}

The main result of this paper reads as follows.
\begin{thm}[Main result]\label{main_result}
Let the assumptions {\rm (i)--(iii)} be satisfied.
Then there exists at
least one weak solution of the
system  \eqref{strong:eq1a}--\eqref{strong:eq1l}.
\end{thm}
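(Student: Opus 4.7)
The plan is to follow the Rothe (semidiscretization in time) scheme announced in the introduction. Partition $I$ into $N$ equal subintervals of length $\tau=T/N$; given $(u^{k-1},w^{k-1},\theta^{k-1})$, seek $(u^k,w^k,\theta^k)\in (W^{1,2}_{\Gamma_D}(\Omega))^3$ solving the natural backward-Euler variational system associated with \eqref{strong:eq1a}--\eqref{strong:eq1c}. Since the coefficients $a(\theta)$, $b(u)D_w(u)$, $\lambda(\theta,u)$ are only continuous and couple all three unknowns, existence at each time step is obtained by a Galerkin approximation in space combined with a topological (Brouwer/Schauder) fixed-point argument to handle the coupling through the coefficients.

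The decisive ingredient, as emphasized in the abstract, is the derivation of $L^\infty$ a-priori bounds, without which the degenerate coefficients $a$, $D_w$, $\lambda$ cannot be controlled from below. Crucially, equation \eqref{strong:eq1a} decouples in $u$ from $w,\theta$, so one tests the discrete $u$-equation with the Stampacchia-type truncations $(u^k-u_2)_+$ and $(u_1-u^k)_+$ and uses the strict monotonicity of $b$ from (i) to propagate inductively the bound $u_1 \leq u^k \leq u_2 < 0$ guaranteed on the initial datum by (iii). Under this bound all coefficients in \eqref{strong:eq1b}--\eqref{strong:eq1c} become bounded above and below by positive constants depending only on the data, and analogous truncations of $(w^k-\|w_0\|_{L^\infty(\Omega)})_+$, etc., propagate uniform $L^\infty$-bounds for $w^k$ and $\theta^k$. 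Standard energy testing with $u^k$, $w^k$, $\theta^k$ themselves, combined with the auxiliary function $B$ from Section 2.3 to absorb the degenerate time derivative $\partial_t b(u)$, then yields the $L^2(I;W^{1,2}_{\Gamma_D}(\Omega))$ a-priori estimates uniformly in $\tau$.

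For the passage to the limit, define piecewise constant and piecewise affine time interpolants, extract weak limits from the uniform estimates, and invoke the Alt--Luckhaus machinery: a translation-in-time estimate on $b(u_\tau)$ combined with the identity $B(s)-B(r)\geq (b(s)-b(r))r$ yields strong $L^2(Q_T)$-convergence of $b(u_\tau)$ and, by strict monotonicity of $b$, almost-everywhere convergence of $u_\tau$ itself; the same device applied to $b(u_\tau)w_\tau$ and to $(b(u_\tau)+\varrho)\theta_\tau$ delivers strong $L^2$-compactness of $w_\tau$ and $\theta_\tau$. Passage to the limit in each term of \eqref{weak_form_01}--\eqref{weak_form_03} is then essentially routine, since pointwise convergence of $a(\theta_\tau)$, $b(u_\tau)$, $D_w(u_\tau)$ and $\lambda(\theta_\tau,u_\tau)$ follows from continuity of these functions, and the uniform $L^\infty$-control provides equi-integrability.

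I expect the principal obstacle to be the convective cross-terms $w\,a(\theta)\nabla u$ and $\theta\,a(\theta)\nabla u$ in \eqref{strong:eq1b}--\eqref{strong:eq1c}: because $\nabla u_\tau$ converges only weakly in $L^2(Q_T)$, their treatment absolutely requires \emph{strong} $L^2$-convergence of $w_\tau$ and $\theta_\tau$ together with pointwise convergence of $a(\theta_\tau)$. This forces the Alt--Luckhaus compactness analysis to be carried out jointly on the nonlinearly coupled time-derivative $\partial_t[b(u)\theta+\varrho\theta]$ rather than on each factor separately: the extra $\varrho\theta$ contribution is uniformly non-degenerate and therefore helpful, but it does not on its own compensate for the degeneracy of $b(u)\theta$. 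A secondary delicate point is the fixed-point step at each discrete time level, where the $L^\infty$ bound $u^k\in[u_1,u_2]$ should be built into the invariant set of the fixed-point map (e.g., by truncating the coefficients outside this set and verifying a posteriori that the truncation is inactive), because the coefficients are not Lipschitz and a contraction argument is not available.
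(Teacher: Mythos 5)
Your overall strategy---Rothe discretization, inductive $L^{\infty}$ bounds via truncation, energy estimates using the Legendre transform $B$, and Alt--Luckhaus translation compactness to extract an a.e.\ convergent subsequence of $u_\tau$, $w_\tau$, $\theta_\tau$---matches the paper's and is sound, including your correct identification that the convective cross-terms force one to prove strong convergence of $w_\tau$, $\theta_\tau$ rather than just weak convergence. There are, however, two genuine divergences at the discrete level worth noting.

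First, you propose a fully implicit scheme solved at each time step by a Galerkin plus Brouwer/Schauder fixed-point argument to resolve the coupling. The paper instead uses a \emph{semi-implicit} scheme \eqref{approximate_problem_01}--\eqref{approximate_problem_03}: all coefficients $a(\theta)$, $b(u)D_w(u)$, $\lambda(\theta,u)$, and the convective factor $a(\theta)\nabla u$ are evaluated at time level $n-1$, so the three equations decouple and can be solved \emph{sequentially} --- first a quasilinear monotone equation for $u^n_p$, then two linear elliptic equations for $w^n_p$ and $\theta^n_p$. This sidesteps the fixed-point argument entirely, and it also makes the $W^{1,s}$-regularity ($s>2$) of the discrete solutions accessible via the elliptic regularity theory of Gallou\"et--Monier and Gr\"oger. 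That regularity step is absent from your sketch, but it is not cosmetic: you need $u^n_p\in W^{1,s}_{\Gamma_D}(\Omega)\subset L^\infty(\Omega)$ in two dimensions so that the lower-order terms $w^n_p\,a(\theta^{n-1}_p)\nabla u^n_p$ make sense as right-hand sides in the bootstrap for $w^n_p$ and $\theta^n_p$, and so that the induction hypothesis $u^{n-1}_p\in L^\infty$ propagates. Your alternative (truncating coefficients, building the invariant box $[u_1,u_2]$ into the fixed-point set, and verifying a posteriori that the truncation is inactive) can be made to work, but it is more technical and you would still need some form of discrete regularity for the bootstrap.

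Second, for the $L^\infty$ bounds on $w$ and $\theta$ you propose direct Stampacchia truncation $(w^k-\|w_0\|_{L^\infty})_+$, whereas the paper tests the $w$-equation with $(\bar w_p)^\ell$ and \emph{simultaneously} tests the $u$-equation with the companion function $\frac{\ell}{\ell+1}(\bar w_p)^{\ell+1}$, deriving an $L^{\ell+1}$ bound uniform in $\ell$ and then sending $\ell\to\infty$. The crucial mechanism in both approaches is the cancellation between the convection term in the $w$-equation and the elliptic term in the $u$-equation when the right companion test is used; your sketch omits this companion test, and without it the term $\int w\,a(\theta)\nabla u\cdot\nabla(w-M)_+$ has no sign and cannot be controlled by the available dissipation. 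So the truncation route is viable, but only if you explicitly pair the truncation test in the $w$-equation with the test $\phi=\tfrac12(w-M)_+^2$ in the $u$-equation (and similarly for $\theta$); as written your argument has a gap there. Note also that the paper tests the $u$-equation with $[b(\bar u_p)-b(u_1)]_-$ and $[b(\bar u_p)-b(u_2)]_+$ rather than with truncations of $\bar u_p$ itself; the two are equivalent by monotonicity of $b$, but the paper's choice turns the discrete time-difference term directly into a telescoping square, which is cleaner.

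Everything else --- the use of $B(z)$ to absorb the degenerate discrete time derivative in the energy estimate, the translation estimates $\int_0^{T-k\tau}(b(\bar u_p(s+k\tau))-b(\bar u_p(s)))(\bar u_p(s+k\tau)-\bar u_p(s))\,{\rm d}s\leq Ck\tau$ and the analogues for $b(\bar u_p)\bar w_p$ and $\theta$, strict monotonicity of $b$ to upgrade $L^1$-convergence of $b(\bar u_p)$ to a.e.\ convergence of $\bar u_p$, and the final limit passage using the $L^\infty$ bounds for equi-integrability --- coincides with the paper's argument.
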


To prove the main result of the paper we use the method
of semidiscretization in time by constructing temporal approximations
and limiting procedure.
The proof can be divided into three steps.
In the first step we approximate our problem by means of a semi-implicit time discretization
scheme (which preserve the pseudo-monotone structure of the discrete problem)
and prove the existence and $W^{1,s}(\Omega)$-regularity (with some $s>2$) of temporal approximations.
In the second step we construct piecewise constant time interpolants and derive suitable a-priori estimates.
The key point is to establish $L^{\infty}$-estimates to overcome degeneracies in transport coefficients.
Finally, in the third step we pass to the limit from discrete approximations.

\section{Proof of the main result}\label{sec:proof_main}
\subsection{Approximations}\label{sec:approximations}

Let us fix $p \in \mathbb{N}$ and set $\tau:= T/p$ (a time step).
Further, let us consider
$u^0_{p} := u_{0}$,
$w^0_{p} := w_{0}$,
$\theta^{0}_{p} := \theta_{0}$ a.e. on $\Omega$.
We approximate our evolution problem by a semi-implicit time discretization scheme.
Then we define, in each time step $n=1,\dots,p$, a triplet $[u^n_{p},w^n_{p},\theta^n_{p}]$
as a solution of the
following recurrence steady problem.


\bigskip

For a given triplet
$[u^{n-1}_{p},w^{n-1}_{p},\theta^{n-1}_{p}]$,
$n=1,\dots,p$,
$u^{n-1}_{p} \in  L^{\infty}(\Omega)$,
$w^{n-1}_{p} \in L^{\infty}(\Omega)$,
$\theta^{n-1}_{p} \in  L^{\infty}(\Omega)$,
find  $[u^n_{p},w^n_{p},\theta^n_{p}]$,
such that
$u^n_{p} \in  W_{\Gamma_D}^{1,s}(\Omega)$,
$w^n_{p} \in  W_{\Gamma_D}^{1,s}(\Omega)$,
$\theta^n_{p} \in  W_{\Gamma_D}^{1,s}(\Omega)$ with some $s>2$
and
\begin{equation}
\label{approximate_problem_01}
\int_{\Omega}
\frac{b(u_{p}^n) - b(u_{p}^{n-1})}{\tau} \phi
{\,{\rm d}x}
+
\int_{\Omega}
a(\theta_{p}^{n-1}) \nabla u_{p}^n
\cdot\nabla\phi
{\,{\rm d}x}
=
0
\end{equation}
for any $\phi \in {W_{\Gamma_D}^{1,2}(\Omega)}$;

\begin{multline}\label{approximate_problem_02}
\int_{\Omega}
\frac{ b(u_{p}^n)w^n_{p} - b(u_{p}^{n-1})w_{p}^{n-1} }{\tau} \eta
{\,{\rm d}x}
\\
+
\int_{\Omega}
b(u_{p}^{n-1})D_w(u_{p}^{n-1})
\nabla w_{p}^n \cdot \nabla\eta
{\,{\rm d}x}
+
\int_{\Omega}
w_{p}^n a(\theta_{p}^{n-1}) \nabla u_{p}^n
\cdot \nabla \eta
{\,{\rm d}x}
=
0
\end{multline}
for any  $\eta \in {W_{\Gamma_D}^{1,2}(\Omega)}$;

\begin{multline}\label{approximate_problem_03}
\int_{\Omega}
\frac{ b(u_{p}^n)\theta^n_{p} - b(u_{p}^{n-1})\theta_{p}^{n-1} }{\tau} \psi
{\,{\rm d}x}
+
\int_{\Omega}
{\varrho}
\frac{ \theta_{p}^n - \theta_{p}^{n-1} }{\tau}  \psi
{\,{\rm d}x}
\\
+
\int_{\Omega}
\lambda(\theta_{p}^{n-1},u_{p}^{n-1}) \nabla \theta_{p}^n \cdot \nabla\psi
{{\rm d}\Omega}
+
\int_{\Omega}
\theta_{p}^n a(\theta_{p}^{n-1}) \nabla u_{p}^n
\cdot \nabla\psi
{{\rm d}\Omega}
=
0
\end{multline}
for any $\psi \in {W_{\Gamma_D}^{1,2}(\Omega)}$.

\begin{thm}[Existence of the solution to
\eqref{approximate_problem_01}--\eqref{approximate_problem_03}]
\label{thm:aprox}
Let
$u^{n-1}_{p} \in  L^{\infty}(\Omega)$,
$w^{n-1}_{p} \in L^{\infty}(\Omega)$,
$\theta^{n-1}_{p} \in  L^{\infty}(\Omega)$
be given and the assumptions {\rm (i)--(iii)} be satisfied.
Then there exists $[u^n_{p},w^n_{p},\theta^n_{p}]$,
such that
$u^n_{p} \in  W_{\Gamma_D}^{1,s}(\Omega)$,
$w^n_{p} \in  W_{\Gamma_D}^{1,s}(\Omega)$
and
$\theta^n_{p} \in  W_{\Gamma_D}^{1,s}(\Omega)$ with some $s>2$
satisfying
\eqref{approximate_problem_01}--\eqref{approximate_problem_03}.
\end{thm}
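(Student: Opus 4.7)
The plan is to exploit the semi-implicit, quasi-linear structure of the
scheme, which decouples the three equations at each time step:
\eqref{approximate_problem_01} is a nonlinear but scalar problem in
$u_p^n$ alone (all remaining data are from step $n-1$), and once $u_p^n$
is known, \eqref{approximate_problem_02} and \eqref{approximate_problem_03}
reduce to two independent linear problems for $w_p^n$ and $\theta_p^n$.
I would first solve \eqref{approximate_problem_01} by monotone operator
theory, then handle the two linear equations by a Galerkin / Lax--Milgram
argument, and finally upgrade each component from $W^{1,2}_{\Gamma_D}(\Omega)$
to $W^{1,s}_{\Gamma_D}(\Omega)$ via the Meyers higher-integrability theorem
for divergence-form linear elliptic equations with $L^\infty$ coefficients,
mixed Dirichlet--Neumann data and a Lipschitz boundary; a common $s>2$
follows by taking the minimum of the three exponents.

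For \eqref{approximate_problem_01} I consider
$A\colon W^{1,2}_{\Gamma_D}(\Omega)\to W^{1,2}_{\Gamma_D}(\Omega)^*$
defined by
$\langle Au,\phi\rangle=\tau^{-1}\!\int_\Omega b(u)\phi+\int_\Omega a(\theta_p^{n-1})\nabla u\cdot\nabla\phi$.
Since $\theta_p^{n-1}\in L^\infty(\Omega)$ and $a\in C(\mathbb{R})$ is
strictly positive, $a(\theta_p^{n-1})$ is bounded above and below by positive
constants, so the principal part is uniformly elliptic; combined with the
strict monotonicity and boundedness of $b$ from (i), this makes $A$ bounded,
hemicontinuous, strictly monotone and coercive, and the Browder--Minty
theorem produces a unique $u_p^n\in W^{1,2}_{\Gamma_D}(\Omega)$.
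The associated linear equation has right-hand side
$\tau^{-1}(b(u_p^{n-1})-b(u_p^n))\in L^\infty(\Omega)$, whence Meyers' theorem
yields $u_p^n\in W^{1,s}_{\Gamma_D}(\Omega)$ for some $s>2$, and the
two-dimensional Morrey embedding also gives $u_p^n\in L^\infty(\Omega)$.
In particular $a(\theta_p^{n-1})\nabla u_p^n\in L^s(\Omega)$, which is what
one needs to interpret the convective terms of \eqref{approximate_problem_02}
and \eqref{approximate_problem_03} as continuous forms on
$W^{1,2}_{\Gamma_D}(\Omega)$.

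With $u_p^n$ fixed, \eqref{approximate_problem_02} becomes a linear problem
$\mathcal{B}(w_p^n,\eta)=\tau^{-1}\!\int_\Omega b(u_p^{n-1})w_p^{n-1}\eta$
with bilinear form
\[
\mathcal{B}(w,\eta)
=\tau^{-1}\!\!\int_\Omega b(u_p^n)w\eta
+\int_\Omega b(u_p^{n-1})D_w(u_p^{n-1})\nabla w\cdot\nabla\eta
+\int_\Omega w\,a(\theta_p^{n-1})\nabla u_p^n\cdot\nabla\eta.
\]
The principal obstacle is the coercivity of $\mathcal{B}$: the non-symmetric
convective term is not sign-definite, and it cannot be absorbed by direct
H\"older estimates since $\nabla u_p^n$ is only in $L^s$. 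I would resolve
this by working in a Galerkin subspace $V_m$ of smooth functions of
$W^{1,2}_{\Gamma_D}(\Omega)$, so that for $w_m\in V_m$ the function
$w_m^2/2$ is an admissible test in \eqref{approximate_problem_01} and yields
\[
\int_\Omega w_m\,a(\theta_p^{n-1})\nabla u_p^n\cdot\nabla w_m
=-\frac{1}{2\tau}\int_\Omega\bigl(b(u_p^n)-b(u_p^{n-1})\bigr)w_m^2.
\]
Substituting this into $\mathcal{B}(w_m,w_m)$ collapses the convective term
and leaves
$\mathcal{B}(w_m,w_m)=\tfrac{1}{2\tau}\!\int_\Omega(b(u_p^n)+b(u_p^{n-1}))w_m^2+\int_\Omega b(u_p^{n-1})D_w(u_p^{n-1})|\nabla w_m|^2\ge c\|w_m\|_{W^{1,2}}^2$
with $c>0$, because $b$ and $D_w$ are continuous and strictly positive on
the compact ranges of $u_p^n,u_p^{n-1}\in L^\infty(\Omega)$. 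This
simultaneously yields injectivity (hence invertibility) of the finite-dimensional
system and a uniform bound via $\mathcal{B}(w_m,w_m)=L(w_m)$; a standard
weak-compactness passage produces $w_p^n\in W^{1,2}_{\Gamma_D}(\Omega)$, and
Meyers then upgrades it to $W^{1,s}_{\Gamma_D}(\Omega)$. Equation
\eqref{approximate_problem_03} is entirely analogous, with
$\lambda(\theta_p^{n-1},u_p^{n-1})$ replacing $b(u_p^{n-1})D_w(u_p^{n-1})$
and the constant $\varrho>0$ providing an additional, unconditional positive
lower bound on the zero-order coefficient.
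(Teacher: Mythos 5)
Your argument is correct and follows the same overall structure as the paper: solve \eqref{approximate_problem_01} for $u_p^n$ first, treat \eqref{approximate_problem_02} and \eqref{approximate_problem_03} as decoupled linear elliptic problems once $u_p^n$ is known, and then upgrade integrability to some $s>2$ via the $W^{1,p}$-theory for mixed elliptic problems. The paper's own proof delegates the existence of the semi-discrete triple to \cite{BenesKrupicka2016} and only spells out the bootstrap (moving low-order terms to the right-hand side and invoking \cite[Theorem 4]{gallouet}); you supply exactly the missing existence step, with Browder--Minty for $u_p^n$ and a Galerkin scheme for $w_p^n$, $\theta_p^n$ whose coercivity is recovered by testing \eqref{approximate_problem_01} with $w_m^2/2$ to cancel the indefinite convective term. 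That cancellation is precisely the structural device the authors use in the a-priori estimates (testing \eqref{eq:101} with $\bar w_p^2$ and with $[\ell/(\ell+1)]\bar w_p^{\ell+1}$), so you are working in the spirit of the paper rather than departing from it. Two small remarks: in this geometry (Lipschitz domain, mixed Dirichlet--Neumann, $\Gamma_D\neq\emptyset$) the relevant higher-integrability result is Gr\"oger's \cite{groger1989} or Gallou\"et--Monier's \cite{gallouet} rather than classical Meyers, which you implicitly acknowledge; and the real obstruction to absorbing the convective term by H\"older and Young is not that $\nabla u_p^n$ lies only in $L^s$ rather than $L^\infty$ --- even with bounded gradients such absorption would impose a smallness restriction on $\tau$ --- whereas the energy identity gives coercivity unconditionally. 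Neither point affects the validity of your proof.
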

\begin{proof}
The proof rests on the
$W^{1,p}$-regularity of elliptic problems presented in \cite{gallouet,groger1989}
and the embedding
$W_{\Gamma_D}^{1,s}(\Omega) \subset L^{\infty}(\Omega)$ if $s>2$
(recall that  $\Omega$ is a bounded domain in $\mathbb{R}^2$).

The existence of $u^n_{p} \in  W_{\Gamma_D}^{1,s}(\Omega)$ with some $s>2$
and $\theta^n_{p} \in  W_{\Gamma_D}^{1,2}(\Omega)$, solutions to problems
\eqref{approximate_problem_01} and \eqref{approximate_problem_03}, respectively,
is proven in \cite{BenesKrupicka2016}.
The existence of  $w^n_{p} \in  W_{\Gamma_D}^{1,2}(\Omega)$,
the solution to \eqref{approximate_problem_02},
can be handled in the same way.

Now, with $w^n_{p} \in  W_{\Gamma_D}^{1,2}(\Omega)$ in hand,
rewrite the equation \eqref{approximate_problem_02} in the form
(transferring the lower-order terms to the right hand side)
\begin{multline*}
\int_{\Omega}
b(u_{p}^{n-1})D_w(u_{p}^{n-1})
\nabla w_{p}^n \cdot \nabla\eta
{\,{\rm d}x}
\\
=
-
\int_{\Omega}
\frac{ b(u_{p}^n)w^n_{p} - b(u_{p}^{n-1})w_{p}^{n-1} }{\tau} \eta
{\,{\rm d}x}
-
\int_{\Omega}
w_{p}^n a(\theta_{p}^{n-1}) \nabla u_{p}^n
\cdot \nabla \eta
{\,{\rm d}x}.
\end{multline*}
Since $u^{n-1}_{p} \in L^{\infty}(\Omega)$, $u^n_{p} \in  W_{\Gamma_D}^{1,s}(\Omega)$ with some $s>2$,
$w^{n-1}_{p} \in L^{\infty}(\Omega)$,
$\theta^{n-1}_{p} \in  L^{\infty}(\Omega)$,
both integrals on the right hand side
make sense for any $\eta \in W_{\Gamma_D}^{1,r'}(\Omega)$, $r'=r/(r-1)$ with some $r>2$.
Now we are able to apply \cite[Theorem~4]{gallouet} to obtain
$w^n_{p} \in  W_{\Gamma_D}^{1,s}(\Omega)$ with some $s>2$.
Analysis similar to the above implies that
$\theta^n_{p} \in  W_{\Gamma_D}^{1,s}(\Omega)$ with some $s>2$.
\end{proof}

\subsection{A-priori estimates}
\label{sec:estimates}
In this part we prove some uniform estimates (with respect to $p$) for
the time interpolants of the solution.
In the following estimates, many different constants
will appear. For simplicity of notation,
$C$ represents generic constants which may change their numerical value from one formula to another
but do not depend on $p$ and
the functions under consideration.

\subsubsection{Construction of temporal interpolants}
With the sequences $u^n_{p},w^n_{p},\theta^n_{p}$ constructed
in Section~\ref{sec:approximations}, we define
the piecewise constant interpolants
$\bar{\phi}_{p}(t) = \phi_{p}^{n}$ for $t \in ((n - 1)\tau, n\tau]$
and, in addition, we extend $\bar{\phi}_{p}$ for $t\leq 0$ by
$
\bar{\phi}_{p}(t) = \phi_{0}$ for $t \in (-\tau, 0]$.
For a function $\varphi$ we often use the simplified notation
$\varphi := \varphi(t)$, $\varphi_\tau(t) := \varphi(t-\tau)$,
$\partial_t^{-\tau}\varphi(t) := \frac{\varphi(t) - \varphi(t-\tau)}{\tau}$,
$\partial_t^{\tau}\varphi(t) := \frac{\varphi(t+\tau) - \varphi(t)}{\tau}$.
Then,
following \eqref{approximate_problem_01}--\eqref{approximate_problem_03},
the piecewise constant time interpolants
$\bar{u}_{p} \in L^{\infty}(I;W_{\Gamma_D}^{1,s}(\Omega))$,
$\bar{w}_{p} \in L^{\infty}(I;W_{\Gamma_D}^{1,s}(\Omega))$
and
$\bar{\theta}_{p} \in L^{\infty}(I;W_{\Gamma_D}^{1,s}(\Omega))$ (with some $s>2$)
satisfy the equations
\begin{equation}
\label{eq:101}
\int_{\Omega}
\partial_t^{-\tau} {b}(\bar{u}_{p}(t)) \phi
{\,{\rm d}x}
+
\int_{\Omega}
a(\bar{\theta}_{p}(t-\tau))\nabla \bar{u}_{p}(t)
\cdot\nabla \phi
{\,{\rm d}x}
=
0
\end{equation}
for any $\phi \in {W_{\Gamma_D}^{1,2}(\Omega)}$,
\begin{multline}\label{eq:102}
\int_{\Omega}
\partial_t^{-\tau}  [ {b}(\bar{u}_{p}(t))\bar{w}_{p}(t) ] \eta
{\,{\rm d}x}
+
\int_{\Omega}
{b}(\bar{u}_{p}(t-\tau))D_w(\bar{u}_{p}(t-\tau)) \nabla \bar{w}_{p}(t) \cdot \nabla\eta
{\,{\rm d}x}
\\
+
\int_{\Omega}
\bar{w}_{p}(t) a(\bar{\theta}_{p}(t-\tau))\nabla \bar{u}_{p}(t)
\cdot \nabla\eta
{\,{\rm d}x}
=
0
\end{multline}
for any $\eta \in {W_{\Gamma_D}^{1,2}(\Omega)}$
and

\begin{multline}\label{eq:103}
\int_{\Omega}
\partial_t^{-\tau} \left[ {b}(\bar{u}_{p}(t))\bar{\theta}_{p}(t)
+
{\varrho}\bar{\theta}_{p}(t)
\right] \psi
{\,{\rm d}x}
\\
+
\int_{\Omega}
\lambda(\bar{\theta}_{p}(t-\tau),\bar{u}_{p}(t-\tau)) \nabla \bar{\theta}_{p}(t) \cdot \nabla\psi
{\,{\rm d}x}
\\
+
\int_{\Omega}
\bar{\theta}_{p}(t)
a(\bar{\theta}_{p}(t-\tau))\nabla \bar{u}_{p}(t)
\cdot \nabla\psi
{\,{\rm d}x}
=
0
\end{multline}
for any $\psi \in {W_{\Gamma_D}^{1,2}(\Omega)}$.


\subsubsection{$L^{\infty}$-bound for $\bar{u}_{p}$, $\bar{w}_{p}$ and $\bar{\theta}_{p}$}

First we prove the $L^{\infty}$-estimate for $\bar{u}_{p}$.
Let us set
\begin{equation}
\phi := [ b(\bar{u}_{p}) - b(u_{1}) ]_{-}
=
\left\{
\begin{array}{ll}
b(\bar{u}_{p}) - b(u_{1}) ,         &  \bar{u}_{p} < u_{1} ,
\\
0 ,                   & \bar{u}_{p} \geq u_{1},
\end{array} \right.
\end{equation}
as a test function in  \eqref{eq:101}. Note that $\phi$ vanishes on $\Gamma_D$.
It is a simple matter to deduce
\begin{multline}
\label{}
\frac{1}{2}\int_{\Omega} [ b(\bar{u}_{p}(t))-b(u_{1}) ]_-^2{\rm d}{x}
\\
+
\int_{Q_t}
a(\bar{\theta}_{p}(s-\tau)) b'(\bar{u}_{p}(s)) |\nabla \bar{u}_{p}(s)|^2
\chi_{\left\{ \bar{u}_{p} < u_{1}  \right\}}
{\rm d}x{\rm d}s
=
0
\end{multline}
for almost every $t\in I$.
Hence we conclude that the set $\left\{x \in \Omega;\; \bar{u}_{p}(x,t) < u_{1}   \right\}$
has a measure zero for almost every $t\in I$.

Now setting
\begin{equation}
\phi = [ b(\bar{u}_{p})-b(u_{2}) ]_{+}
=
\left\{
\begin{array}{ll}
b(\bar{u}_{p}) - b(u_{2}) ,         &  \bar{u}_{p} > u_{2} ,
\\
0 ,                   & \bar{u}_{p} \leq u_{2},
\end{array} \right.
\end{equation}
we obtain, using similar arguments,
$$
\frac{1}{2}\int_{\Omega} [ b(\bar{u}_{p}) - b(u_{2}) ]_+^2{\rm d}{x} = 0
\quad \textmd{ for almost every }t\in I.
$$
Hence the set $\left\{x \in \Omega;\; \bar{u}_{p}(x,t) > u_{2}   \right\}$
has a measure zero for almost every $t\in I$.
Finally, combining the previous arguments, we deduce
\begin{equation}\label{est:uniform_bound_u_02}
\|\bar{u}_{p}\|_{L^{\infty}({Q_T})} \leq {C},
\end{equation}
where ${C}$ does not depend on $p$.

Now we prove a similar estimate for $\bar{w}_{p}$.
Let $\ell$ be an odd integer.
Using $\phi = [\ell/(\ell+1)] (\bar{w}_{p})^{\ell+1}$
as a test function in \eqref{eq:101}
and
$\eta = (\bar{w}_{p})^{\ell}$ in \eqref{eq:102}
and combining both equations we obtain
\begin{align}\label{eq:501}
&
\frac{1}{\tau}
\frac{1}{\ell+1}
\int_{\Omega}
 {b}(\bar{u}_{p}(s))[\bar{w}_{p}(s)]^{\ell+1}
{\,{\rm d}x}
-
\frac{1}{\tau}
\frac{1}{\ell+1}
\int_{\Omega}
 {b}(\bar{u}_{p}(s-\tau))[\bar{w}_{p}(s-\tau)]^{\ell+1}
{\,{\rm d}x}
\\
&
+
\frac{1}{\tau}
\frac{1}{\ell+1}
\int_{\Omega}
 {b}(\bar{u}_{p}(s-\tau))[\bar{w}_{p}(s-\tau)]^{\ell+1}
{\,{\rm d}x}
+
\frac{1}{\tau}
\frac{\ell}{\ell+1}
\int_{\Omega}
 {b}(\bar{u}_{p}(s-\tau))[\bar{w}_{p}(s)]^{\ell+1}
{\,{\rm d}x}
\nonumber
\\
&
-
\frac{1}{\tau}
\int_{\Omega}
 {b}(\bar{u}_{p}(s-\tau))\bar{w}_{p}(s-\tau)[\bar{w}_{p}(s)]^{\ell}
{\,{\rm d}x}
\nonumber
\\
&
+
\int_{\Omega}
\ell [\bar{w}_{p}(s)]^{\ell-1}
{b}(\bar{u}_{p}(s-\tau))D_w(\bar{u}_{p}(s-\tau)) \nabla \bar{w}_{p}(s) \cdot \nabla\bar{w}_{p}(s)
{\,{\rm d}x}
\nonumber
\\
&
=
0.
\nonumber
\end{align}
Applying the Young's inequality we can write for the term in the third line
\begin{multline}\label{eq:502}
\frac{1}{\tau}
\int_{\Omega}
 {b}(\bar{u}_{p}(s-\tau))\bar{w}_{p}(s-\tau)[\bar{w}_{p}(s)]^{\ell}
{\,{\rm d}x}
\leq
\frac{1}{\tau}
\frac{1}{\ell+1}
\int_{\Omega}
 {b}(\bar{u}_{p}(s-\tau))[\bar{w}_{p}(s-\tau)]^{\ell+1}
{\,{\rm d}x}
\\
+
\frac{1}{\tau}
\frac{\ell}{\ell+1}
\int_{\Omega}
 {b}(\bar{u}_{p}(s-\tau))[\bar{w}_{p}(s)]^{\ell+1}
{\,{\rm d}x}.
\end{multline}
Combining \eqref{eq:501} and \eqref{eq:502} we deduce
\begin{align}\label{eq:503}
&
\frac{1}{\tau}
\frac{1}{\ell+1}
\int_{\Omega}
 {b}(\bar{u}_{p}(s))[\bar{w}_{p}(s)]^{\ell+1}
{\,{\rm d}x}
-
\frac{1}{\tau}
\frac{1}{\ell+1}
\int_{\Omega}
 {b}(\bar{u}_{p}(s-\tau))[\bar{w}_{p}(s-\tau)]^{\ell+1}
{\,{\rm d}x}
\\
&
+
\int_{\Omega}
\ell [\bar{w}_{p}(s)]^{\ell-1}
{b}(\bar{u}_{p}(s-\tau))D_w(\bar{u}_{p}(s-\tau)) \nabla \bar{w}_{p}(s) \cdot \nabla\bar{w}_{p}(s)
{\,{\rm d}x}
\nonumber
\\
&
\leq
0.
\nonumber
\end{align}
Now, integrating \eqref{eq:503} over $s$ from $0$ to $t$ we get
\begin{align}\label{app:eq_bound_w_00}
&
\int_{\Omega}
(\bar{w}_{p}(t))^{\ell+1}
b(\bar{u}_{p}(t))
{\rm d}x
\\
&
+
\int_{\Omega_t}
(\ell+1)\ell [\bar{w}_{p}(s)]^{\ell-1}
{b}(\bar{u}_{p}(s-\tau))D_w(\bar{u}_{p}(s-\tau))
|\nabla \bar{w}_{p}(s)|^2
{\rm d}x{\rm d}s
\nonumber
\\
&
\leq
\int_{\Omega}
(w_0)^{\ell+1}
b\left(u_0\right)
{\rm d}{x}.
\nonumber
\end{align}
Note that the second integral in
\eqref{app:eq_bound_w_00} is nonnegative ($\ell$ is supposed to be the odd integer).
Moreover, from \eqref{app:eq_bound_w_00} and \eqref{est:uniform_bound_u_02} it follows that
\begin{equation}\label{est:unform_bound_w_01}
\|\bar{w}_{p}\|_{L^{\infty}(0,T;L^{\ell+1}(\Omega))} \leq C,
\end{equation}
where the constant $C$ is independent of $\ell$ and $p$.
Now, let $\ell \rightarrow +\infty$ in \eqref{est:unform_bound_w_01}, we get
\begin{equation}\label{est:unform_bound_w_05}
\|\bar{w}_{p}\|_{L^{\infty}({Q_T})} \leq C.
\end{equation}

In the same manner we arrive at
the estimate for $\bar{\theta}_{p}$, i.e.
\begin{equation}\label{est:unform_bound_theta_05}
\|\bar{\theta}_{p}\|_{L^{\infty}({Q_T})} \leq C.
\end{equation}

\subsubsection{Energy estimates for $\bar{u}_{p}$, $\bar{w}_{p}$ and $\bar{\theta}_{p}$}
We test \eqref{eq:101} with $\phi = \bar{u}_{p}(t)$ and integrate \eqref{eq:101} over $t$
from $0$ to $s$. For the parabolic term we can write
\begin{equation}\label{est:301}
\int_0^s
\int_{\Omega}
\partial_t^{-\tau} {b}(\bar{u}_{p}(t))  \bar{u}_{p}(t)
{\,{\rm d}x}{\rm d}t
\geq
\frac{1}{\tau}
\int_{s-\tau}^s
\int_{\Omega}
{B}(\bar{u}_{p}(t)) - {B}( u_0 )
{\,{\rm d}x}{\rm d}t.
\end{equation}
Further,  using \eqref{est:uniform_bound_u_02} and \eqref{est:301},
applying the usual estimates for the elliptic
part (see also \cite{AltLuckhaus1983}), we obtain the a-priori estimate
\begin{equation}\label{energy_estimate_u}
\sup_{0\leq t \leq T} \int_{\Omega}{B}(\bar{u}_{p}(t)) {\rm d}x
+
\int_0^T\int_{\Omega}  |\nabla \bar{u}_{p}(t)|^2 {\rm d}x{\rm d}t
\leq {C}.
\end{equation}
Now it follows that there exists a function
$u \in L^2(I;W^{1,2}_{\Gamma_D}(\Omega))$
such that, along a selected
subsequence (letting $p \rightarrow \infty$), we have
$\bar{u}_{p}(t)  \rightharpoonup  u$ weakly in $L^2(I;W^{1,2}_{\Gamma_D}(\Omega))$.

Now we prove similar result for $\bar{w}_{p}(t) $.
Using $\eta(t) = 2 \bar{w}_{p}(t) $
as a test function in \eqref{eq:102} we get
\begin{align}\label{est:_w_01}
&
\int_{\Omega}
\partial_t^{-\tau}{b}(\bar{u}_{p}(t))  2 \bar{w}_{p}(t)^2
{\,{\rm d}x}
+
\int_{\Omega}
\partial_t^{-\tau} \bar{w}_{p}(t) 2\bar{w}_{p}(t)
b(\bar{u}_{p}(t-\tau))
{\,{\rm d}x}
\nonumber
\\
&
+
2\int_{\Omega}
{b}(\bar{u}_{p}(t-\tau))D_w(\bar{u}_{p}(t-\tau)) \nabla \bar{w}_{p}(t)
\cdot
\nabla\bar{w}_{p}(t)
{\,{\rm d}x}
\nonumber
\\
&
+
\int_{\Omega}
a(\bar{\theta}_{p}(t-\tau)) \nabla \bar{u}_{p}(t)
\cdot
2\bar{w}_{p}(t) \nabla \bar{w}_{p}(t)
{\,{\rm d}x}
=0.
\end{align}
One is allowed to use $\phi(t) = \bar{w}_{p}(t)^2$
as a test function in \eqref{eq:101} to obtain
\begin{equation}\label{est:_w_02}
\int_{\Omega}
[\partial_t^{-\tau} {b}(\bar{u}_{p}(t))]  \bar{w}_{p}(t)^2{\,{\rm d}x}
+
\int_{\Omega}
a(\bar{\theta}_{p}(t-\tau)) \nabla \bar{u}_{p}(t)
\cdot\nabla \bar{w}_{p}(t)^2
{\,{\rm d}x}
=
0.
\end{equation}
Combining \eqref{est:_w_01} and \eqref{est:_w_02} we deduce
\begin{align}\label{est:w_100}
&
\int_{\Omega}
\partial_t^{-\tau}
\left[
\bar{w}_{p}(t) ^2
b(\bar{u}_{p}(t))
\right]
{\,{\rm d}x}
%
+
\int_{\Omega}
\frac{1}{\tau}
\left[
\bar{w}_{p}(t)
-
\bar{w}_{p}(t-\tau)
\right]^2
b(\bar{u}_{p}(t-\tau))
{\,{\rm d}x}
\\
&
+
2\int_{\Omega}
{b}(\bar{u}_{p}(t-\tau))D_w(\bar{u}_{p}(t-\tau)) \nabla \bar{w}_{p}(t) \cdot \nabla \bar{w}_{p}(t)
{\,{\rm d}x}
=
0.
\nonumber
\end{align}
In view of \eqref{est:uniform_bound_u_02} we have
\begin{align}
b(\bar{u}_{p}(t)), \;
{b}(\bar{u}_{p}(t-\tau)), \;
D_w(\bar{u}_{p}(t-\tau)) > C \qquad \textmd{ in } \Omega \times (-\tau,T).
\end{align}
Recall that $C$ does not depend on $p$.
Now, integrating \eqref{est:w_100} with respect to time $t$ we obtain
\begin{equation*}
\sup_{0\leq t \leq T}
\int_{\Omega} |\bar{w}_{p}(t)|^2  {\rm d}\Omega
+
\int_0^T \|\bar{w}_{p}(t)\|^2_{W^{1,2}_{\Gamma_D}(\Omega)} {\rm d}\Omega
\leq {C}.
\end{equation*}
From this we can write
\begin{equation}\label{est20_w_a}
\| \bar{w}_p \|_{L^2(I;W^{1,2}_{\Gamma_D}(\Omega))}  \leq  {C}.
\end{equation}

Similarly, we use $\psi(t) = 2\bar{\theta}_{p}(t)$
as a test function in \eqref{eq:102} to obtain
\begin{align}\label{approximate_estimate_02}
&
\int_{\Omega}
\partial_t^{-\tau} b(\bar{u}_{p}(t))2 \bar{\theta}_{p}(t)^2
{\,{\rm d}x}
+
\int_{\Omega}
\partial_t^{-\tau} \bar{\theta}_{p}(t) 2\bar{\theta}_{p}(t)
b(\bar{u}_{p}(t-\tau))
{\,{\rm d}x}
\\
&
+
2
\int_{\Omega}
\lambda(\bar{\theta}_{p}(t-\tau),\bar{u}_{p}(t-\tau))
\nabla \bar{\theta}_{p}(t)
\cdot\nabla \bar{\theta}_{p}(t)
{\,{\rm d}x}
\nonumber
\\
&
+
\int_{\Omega}
a(\bar{\theta}_{p}(t-\tau)) \nabla \bar{u}_{p}(t)
\cdot 2 \bar{\theta}_{p}(t) \nabla\bar{\theta}_{p}(t)
{\,{\rm d}x}
=
0.
\nonumber
\end{align}
Using $\phi(t) = \bar{\theta}_{p}(t)^2$
as a test function in \eqref{eq:101} we get
\begin{equation}\label{approximate_estimate_03}
\int_{\Omega}
\partial_t^{-\tau} {b}(\bar{u}_{p}(t))  \bar{\theta}_{p}(t)^2
{\,{\rm d}x}
+
\int_{\Omega}
a(\bar{\theta}_{p}(t-\tau)) \nabla \bar{u}_{p}(t)
\cdot\nabla \bar{\theta}_{p}(t)^2
{\,{\rm d}x}
=0.
\end{equation}
Combining \eqref{approximate_estimate_02} and \eqref{approximate_estimate_03} we deduce
\begin{multline}\label{approximate_estimate_05}
\int_{\Omega}
\partial_t^{-\tau}
\left[
\left( \bar{\theta}_{p}(t) \right)^2
b(\bar{u}_{p}(t))
\right]
{\,{\rm d}x}
+
\int_{\Omega}
\frac{1}{\tau}\left[
\bar{\theta}_{p}(t)
-
\bar{\theta}_{p}(t-\tau)
\right]^2
b(\bar{u}_{p}(t-\tau))
{\,{\rm d}x}
\\
+
2
\int_{\Omega}
\lambda(\bar{\theta}_{p}(t-\tau),\bar{u}_{p}(t-\tau))
\nabla \bar{\theta}_{p}(t)
\cdot\nabla \bar{\theta}_{p}(t)
{\,{\rm d}x}
=
0.
\end{multline}
Integrating \eqref{approximate_estimate_05} with respect to time $t$  we obtain the a-priori estimate
(using \eqref{est:uniform_bound_u_02} and \eqref{est:unform_bound_theta_05})
\begin{equation}\label{apriori_est_theta_01}
\sup_{0\leq t \leq T} \int_{\Omega} |\bar{\theta}_{p}(t)|^2 {\rm d}x
+
\int_0^T \|\bar{\theta}_{p}(t)\|^2_{W^{1,2}_{\Gamma_D}(\Omega)} {\rm d}t
\leq {C}.
\end{equation}
From this we have
\begin{equation}\label{est20_theta_a}
\| \bar{\theta}_p \|_{L^2(I;W^{1,2}_{\Gamma_D}(\Omega))} \leq {C}.
\end{equation}

\subsubsection{Further estimates}

In order to show that $\bar{u}_{p}$ converges to $u$
almost everywhere on $Q_T$ we follow \cite{AltLuckhaus1983}.
Let $k \in \mathbb{N}$ and use
$$
\phi(t) = \partial^{k\tau}_t \bar{u}_{p}(s)
$$
for $j\tau \leq t \leq (j+k)\tau$ with $(j-1)\tau \leq s \leq j\tau$ and $1\leq j\leq\frac{T}{\tau}-k$,  as a test function in  \eqref{eq:101}.
For the parabolic term, we can write
\begin{align*}
&\int_{j \tau}^{(j+k)\tau}
\int_{\Omega}
\partial_t^{-\tau} {b}(\bar{u}_{p}(t)) \, \partial^{k\tau}_t  \bar{u}_{p}(t)
{\,{\rm d}x}{\rm d}t
\nonumber
\\
&
=
\frac{1}{k \tau^2}
\int_{(j-1)\tau}^{j \tau}
\int_{\Omega}
\left({b}(\bar{u}_{p}(t+k\tau)) - {b}(\bar{u}_{p}(t))  \right)
\left(\bar{u}_{p}(t+k\tau) - \bar{u}_{p}(t)\right)
{\,{\rm d}x}{\rm d}t.
\end{align*}
Hence, summing over $j=1, \dots, p-k$ we get the estimate
\begin{align}\label{est:401}
\sum_{j=1}^{p-k}
&\int_{j \tau}^{(j+k)\tau}
\int_{\Omega}
\partial_t^{-\tau} {b}(\bar{u}_{p}(t)) \, \partial^{k\tau}_t  \bar{u}_{p}(t)
{\,{\rm d}x}{\rm d}t
\\
&
\geq
\frac{1}{k \tau^2}
\int_{0}^{T - k\tau}
\int_{\Omega}
\left({b}(\bar{u}_{p}(t+k\tau)) - {b}(\bar{u}_{p}(t))  \right)
\left(\bar{u}_{p}(t+k\tau) - \bar{u}_{p}(t)\right)
{\,{\rm d}x}{\rm d}t.
\nonumber
\end{align}
Similarly, for the elliptic term, after a little lengthy but straightforward computation we obtain
\begin{align}\label{est:402}
&\sum_{j=1}^{p-k}
\int_{j \tau}^{(j+k)\tau}
\int_{\Omega}
a(\bar{\theta}_{p}(t-\tau)) \nabla \bar{u}_{p}
\cdot
\nabla \partial^{k\tau}_t \bar{u}_{p}
{\,{\rm d}x}{\rm d}t
\\
&
=
\sum_{\ell=1}^{k} \sum_{j=1}^{p-k}
\int_{(j+\ell-1)\tau}^{(j+\ell)\tau}
\int_{\Omega}
\left( a(\bar{\theta}_{p}(t-\tau))
\nabla
\bar{u}_{p}
\right)
\cdot\nabla
\partial^{k\tau}_t  \bar{u}_{p}
{\rm d}x{\rm d}t
\nonumber
\\
&
=
\sum_{\ell=1}^{k}
\int_{\ell\tau}^{T-k\tau+\ell\tau}
\int_{\Omega}
a(\bar{\theta}_{p}(t-\tau))\nabla \bar{u}_{p}(t)
\cdot\nabla
\partial^{k\tau}_t \bar{u}_{p}(t-\ell\tau)
{\,{\rm d}x}{\rm d}t
\nonumber
\\
&
\leq
\frac{c_1}{\tau}
\int_{Q_T}
|a(\bar{\theta}_{p}(t-\tau))\nabla \bar{u}_{p}|^2
{\,{\rm d}x}{\rm d}t
+
\frac{c_2}{\tau}
\int_{Q_T}
|\nabla \bar{u}_{p}|^2
{\,{\rm d}x}{\rm d}t
\nonumber
\\
&
\leq
\frac{C}{\tau}.
\nonumber
\end{align}

Combining \eqref{est:401}--\eqref{est:402} and using \eqref{energy_estimate_u} we obtain
\begin{align}\label{est:402b}
\int_0^{T-k\tau}
\left({b}(\bar{u}_{p}(s+k\tau)) - {b}(\bar{u}_{p}(s)) \right)
(\bar{u}_{p}(s+k\tau) - \bar{u}_{p}(s))
{\rm d}s
\leq C k \tau.
\end{align}
Using the
compactness argument one can show in the same way
as in \cite[Lemma~1.9]{AltLuckhaus1983} and \cite[Eqs. (2.10)--(2.12)]{FiloKacur1995}
\begin{equation}\label{eq555}
b(\bar{u}_{p})   \rightarrow   b(u) \textmd{ in }L^1(Q_T)
\end{equation}
and almost everywhere on $Q_T$.
Since $b$ is strictly monotone,
it follows from \eqref{eq555}
that \cite[Proposition 3.35]{Kacur1990a}
\begin{equation}
\label{conv:u00}
\bar{u}_{p}  \rightarrow  u
\qquad
\textrm{ almost everywhere on } Q_T.
\end{equation}

Further,
in much the same way as
in \eqref{est:402b},      we arrive at
\begin{equation}\label{est21_w}
\int_0^{T-k\tau}
|{b}(\bar{u}_{p}(s+k\tau))\bar{w}_{p}(s+k\tau) - {b}(\bar{u}_{p}(s))\bar{w}_{p}(s) |^2
{\rm d}s
\leq C k \tau.
\end{equation}
From this we conclude, using \eqref{est:unform_bound_w_05}, that
\begin{equation}\label{est22_w}
\int_0^{T-k\tau}
|\bar{w}_{p}(s+k\tau) - \bar{w}_{p}(s) |^2
{\rm d}s
\leq C k \tau.
\end{equation}

Finally, in a similar way, using \eqref{est:unform_bound_theta_05}, we arrive at
\begin{equation}\label{est21_theta}
\int_0^{T-k\tau}
|\bar{\theta}_{p}(s+k\tau) - \bar{\theta}_{p}(s)|^2
{\rm d}s
\leq {C} k \tau.
\end{equation}

\subsection{Passage to the limit}
\label{subsec:limit}

The a-priori estimates
\eqref{est:unform_bound_w_05},
\eqref{est:unform_bound_theta_05},
\eqref{energy_estimate_u},
\eqref{est20_w_a},
\eqref{est20_theta_a},
\eqref{est:402b},
\eqref{est22_w},
\eqref{est21_theta}
allow us to conclude that there exist
$u \in L^2(I;W^{1,2}_{\Gamma_D}(\Omega))$,
$w \in L^2(I;W^{1,2}_{\Gamma_D}(\Omega)) \cap L^{\infty}({Q_T})$
and
$\theta \in L^2(I;W^{1,2}_{\Gamma_D}(\Omega)) \cap L^{\infty}({Q_T})$ such that,
letting $p \rightarrow +\infty$ (along a selected subsequence),
\begin{align*}
\bar{u}_{p} & \rightharpoonup  u
&&
\textrm{weakly in } L^2(I;W^{1,2}_{\Gamma_D}(\Omega)),
\\
\bar{u}_{p} & \rightarrow u
&&
\textrm{almost everywhere on } Q_T,
\\
\bar{w}_{p} & \rightharpoonup  w
&&
\textrm{weakly in } L^2(I;W^{1,2}_{\Gamma_D}(\Omega)),
\\
\bar{w}_{p} & \rightharpoonup  w
&&
\textrm{weakly star in } L^{\infty}({Q_T}),
\\
\bar{w}_{p} & \rightarrow w
&&
\textrm{almost everywhere on } Q_T,
\\
\bar{\theta}_{p} & \rightharpoonup  \theta
&&
\textrm{weakly in } L^2(I;W^{1,2}_{\Gamma_D}(\Omega)),
\\
\bar{\theta}_{p}  & \rightharpoonup  \theta
&&
\textrm{weakly star in } L^{\infty}({Q_T}),
\\
\bar{\theta}_{p} & \rightarrow \theta
&&
\textrm{almost everywhere on } Q_T.
\end{align*}
The above established convergences
are sufficient for taking the limit $p \rightarrow \infty$
in \eqref{eq:101}--\eqref{eq:103}
(along a selected subsequence) to get the weak solution
of the system \eqref{strong:eq1a}--\eqref{strong:eq1l}
in the sense of Definition~\ref{def_weak_solution}.
This completes the proof of the main result stated by Theorem~\ref{main_result}.

\bigskip

\paragraph{Acknowledgment}
This research was supported by the project GA\v{C}R~16-20008S
(Michal Bene\v{s}) and by the grant SGS16/001/OHK1/1T/11 provided
by the Grant Agency of the Czech Technical University in Prague (Luk\'{a}\v{s} Krupi\v{c}ka).

\bibliographystyle{amsplain}

\end{document}